\newtheorem{thm}[subsubsection]{Theorem}
\theoremstyle{plain}
\newtheorem{prop}[subsubsection]{Proposition}
\newtheorem{cor}[subsubsection]{Corollary}
\newtheorem{example}[subsubsection]{Example}
\newtheorem{rmk}[subsubsection]{Remark}
\theoremstyle{remark}
\numberwithin{equation}{section}
\newcommand{\C}{\mathbb C}
\newcommand{\gl}{\mathfrak{gl}}
\newcommand{\h}{\mathfrak{h}}
\newcommand{\del}{\delta}
\newcommand{\ep}{\epsilon}
\newcommand{\la}{\lambda}
\newcommand{\p}{\mathfrak p}
\newcommand{\borel}{\mathfrak b}
\newcommand{\stborel}{\mathfrak b^{\rm st}}
\newcommand{\barone}{\bar{1}}
\newcommand{\bartwo}{\bar{2}}
\newcommand{\sund}{\underline{s}}
\newcommand{\set}[1]{\left\{#1\right\}}
\renewcommand{\bar}[1]{\overline{#1}}
\begin{document}
\title[Super tableaux and a branching rule]{\sc Super tableaux and a branching rule 
for the \\general linear Lie superalgebra}

\author[Clark, Peng, Thamrongpairoj]{Sean Clark, Yung-Ning Peng, and S. Kuang Thamrongpairoj}
\address{Department of Mathematics, University of Virginia, Charlottesville, VA 22904, USA}
\email{sic5ag@virginia.edu (Clark), \quad st5tp@virginia.edu (Thamrongpairoj)}
\address{Institute of Mathematics, Academia Sinica, Taipei City 10617, Taiwan}
\email{ynp@math.sinica.edu.tw (Peng)}

\subjclass[2010]{}
 
\begin{abstract}
In this note, we formulate and prove a branching rule for simple 
polynomial modules of the Lie superalgebra $\mathfrak{gl}(m|n)$. Our
branching rules depend on the conjugacy class of the Borel subalgebra. 
A Gelfand-Tsetlin basis of a polynomial module associated
to each Borel subalgebra is obtained in terms
of generalized semistandard tableaux.
\end{abstract}

\keywords{Lie superalgebras, representations, Young diagrams, Gelfand-Tsetlin bases, duality.}

\maketitle


\section{Introduction}

Let $\gl(m)$ be the general linear Lie algebra and consider
the subalgebras
\[\gl(1)\subset \gl(2)\subset\ldots \subset \gl(m),\]
where we embed $\gl(k)\subset \gl(k+1)$ as the
collection of matrices whose last row and column are zero.
Then given a finite-dimensional simple $\gl(m)$-module 
$L_m(\la)$, we may
restrict the action to view $L_m(\la)$ as a $\gl(k)$-module and decompose it into
simple $\gl(k)$-modules $L_k(\mu)$; when $k=m-1$, this decomposition
is called a branching rule.
When $k=1$, this decomposition describes a linear basis 
for $L_{m}(\la)$, which is 
called the Gelfand-Tsetlin basis (\cite[Chap. 8]{GW}). 
There are several generalizations and
applications of the Gelfand-Tsetlin basis (\cite{Mo}). 
Moreover, this basis has an innate combinatorial description in terms of
semistandard Young tableaux, and hence can be used to derive some
combinatorial identities.

In this note, we generalize these concepts to the general linear
Lie superalgebra $\gl(m|n)$. It is known that there is a branching rule
for the standard Borel subalgebra of upper-triangular matrices
(\cite{BR}). Using Howe
duality (\cite{CW2,Se1}), we formulate and prove a branching law with respect to an
arbitrary choice of Borel subalgebra $\borel$, and in particular our approach
provides a novel proof for the standard Borel case.
We then deduce the existence of a Gelfand-Tsetlin basis parametrized
by certain tableaux, which we call ``$\borel$-semistandard''.
This provides a representation-theoretic proof that the number of these tableaux
is independent of the choice of $\borel$ (\cite{Kw}).

\vspace{.2cm}  {\bf Acknowledgement.} We are grateful to Weiqiang Wang for his
suggestion for this collaboration, and for the guidance and advice he has given
over the course of this project.  We would also like to thank Shun-Jen Cheng for 
his many helpful conversations. The third author was supported by an REU under 
Wang's NSF grant.

\section{Preliminaries} 

In this note, the underlying field is always $\C$, the complex numbers.

\subsection{Partitions, Young Diagrams, and Tableaux} 
Let $\lambda$ be a partition of $n$; that is, a sequence
$\lambda=(\lambda_1,\lambda_2,\ldots)$ of non-negative integers 
with $\lambda_i\geq \lambda_{i+1}$ and $|\lambda|=\sum_i \lambda_i=n$. 
The {\em length of $\la$}, denoted by $l(\la)$, 
is defined to be the number of non-zero entries of $\la$. 
Each partition corresponds to a unique Young diagram,
and we will freely identify a partition with 
its corresponding Young diagram.
We denote the conjugate partition to $\lambda$ by $\lambda'$.

A partition $\la$ is called an {\em $(m|n)$-hook partition}
if $\la_{m+1} \le n$. 
We denote the collection of $(m|n)$-hook partitions by $P_{m|n}$.

Given partitions $\lambda$ and $\mu$ such that $\lambda_i\geq \mu_i$,
let $\lambda/\mu$ denote the skew Young diagram. 
We will say that a skew diagram is a {\em horizontal strip} (respectively, {\em vertical strip})
if each column (respectively, row) of the diagram contains exactly one box.
Recall that given two partitions $\lambda$ and $\mu$, we say 
$\mu$ interlaces $\lambda$ if $\lambda_i\geq \mu_i\geq \lambda_{i-1}$
for all $i$. Then a skew diagram $\lambda/\mu$ is a horizontal strip (respectively,
a vertical strip) if and only if $\mu$ interlaces $\la$ (respectively,
$\mu'$ interlaces $\la'$). 

Let $A$ be a set. 
A {\em Young tableau with entries in $A$} is a Young diagram with an
element of $A$ inserted in each box; we will call $A$ an {\em alphabet},
and an element of $A$ a {\em letter}. 
Let $\lambda$ be a partition and $\mu$ be an arbitrary 
sequence of non-negative integers indexed by $A$ such that 
$\sum_{a\in A} \mu_a=|\lambda|$.
By a  {\em Young tableau of shape $\la$ and content $\mu$}, we mean a 
Young tableau corresponding to partition $\la$ such that each letter $a\in A$
appears in  $\mu_a$ boxes. When convenient, we will also use the notation
$\mu=(a^{\mu_a})_{a\in A}$.

\subsection{Littlewood-Richardson coefficients}
Let $\sigma$ and $\mu$ be partitions of length at most $m$.
Given a third partition $\lambda$, let $c_{\sigma\mu}^{\lambda}$
be the Littlewood-Richardson coefficients. One way to interpret these 
non-negative integers is as multiplicities:
\begin{equation}\label{LR1}
L_m(\sigma)\otimes L_m(\mu)\cong\bigoplus _{\lambda}L_m(\lambda)^{\bigoplus c^{\lambda}_{\sigma\mu}}.
\end{equation}

\begin{prop}\label{LR2} Let $\lambda$, $\mu$ and $\sigma$ be partitions. Then
\begin{enumerate}
\item[(1)] $c_{\sigma\mu}^{\lambda}=c_{\mu\sigma}^{\lambda}$ 
and $c^{\lambda}_{\sigma\mu}$ is nonzero only if $|\mu|+|\sigma|=|\lambda|$.
\item[(2)] $c^{\lambda^\prime}_{\sigma^\prime\mu^\prime} = c^\lambda_{\sigma\mu}$.
\item[(3)] Suppose further that $\mu=(n)$, a one row partition. Then $c^{\lambda}_{\sigma (n)}$ is either 1 or zero. Moreover, $c^{\lambda}_{\sigma (n)}=1$ if and only if the skew shape $\lambda/\sigma$ is a horizontal strip.
\item[(4)] Suppose further that $\mu=(1^n)$, a one column partition. Then $c^{\lambda}_{\sigma (1^n)}$ is either 1 or zero. Moreover, $c^{\lambda}_{\sigma (1^n)}=1$ if and only if the skew shape $\lambda^\prime/\sigma^\prime$ is a horizontal strip.
\end{enumerate}
\end{prop}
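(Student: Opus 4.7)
The plan is to deduce all four parts from the character-theoretic interpretation of the Littlewood-Richardson coefficients, identifying $\operatorname{ch} L_m(\lambda)$ with the Schur polynomial $s_\lambda(x_1,\ldots,x_m)$ and reading (\ref{LR1}) as the Schur function identity $s_\sigma s_\mu = \sum_\lambda c^\lambda_{\sigma\mu} s_\lambda$ (in enough variables that no partition appearing is truncated). Part (1) is then immediate: commutativity of tensor product yields $c^\lambda_{\sigma\mu} = c^\lambda_{\mu\sigma}$, while the weight grading on polynomial modules forces $|\sigma|+|\mu|=|\lambda|$ whenever $c^\lambda_{\sigma\mu}\neq 0$, since the two sides of (\ref{LR1}) must be homogeneous of the same total degree under the scalar torus action.

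For part (2), I would pass to the ring $\Lambda$ of symmetric functions in infinitely many variables and apply the involution $\omega$ characterized by $\omega(s_\lambda)=s_{\lambda'}$. Applying $\omega$ to $s_\sigma s_\mu = \sum_\lambda c^\lambda_{\sigma\mu} s_\lambda$ produces
\[
s_{\sigma'}\, s_{\mu'} = \sum_{\lambda} c^\lambda_{\sigma\mu}\, s_{\lambda'},
\]
and comparing with $s_{\sigma'}s_{\mu'} = \sum_\nu c^\nu_{\sigma'\mu'} s_\nu$ (reindexing by $\nu = \lambda'$) yields the claim.

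For part (3), since $L_m((n))\cong S^n(\C^m)$, we have $s_{(n)} = h_n$, the complete homogeneous symmetric function. The statement is then precisely the classical Pieri rule $s_\sigma \cdot h_n = \sum_{\lambda} s_\lambda$, where the sum runs over $\lambda$ with $\lambda/\sigma$ a horizontal strip of size $n$; this already records that $c^\lambda_{\sigma(n)}\in\{0,1\}$. Part (4) then follows by combining (2) and (3): since $(1^n)' = (n)$, we have $c^\lambda_{\sigma(1^n)} = c^{\lambda'}_{\sigma'(n)}$, which equals $1$ exactly when $\lambda'/\sigma'$ is a horizontal strip.

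The one non-formal ingredient is the Pieri rule used in (3); everything else reduces to symmetric function manipulations and the translation of tensor product decompositions into Schur function identities. If one wishes to keep the note self-contained, the bulk of the proof will therefore be a proof of Pieri, which can be handled either by a direct Weyl-character-formula computation or combinatorially via an insertion argument on semistandard tableaux.
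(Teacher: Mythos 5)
Your proof is correct, but note that the paper does not actually prove Proposition~\ref{LR2}: the statement is followed only by a one-line pointer to the cited references (the Cheng--Wang book and Sagan), so there is no internal argument to compare against. What you have written is precisely the standard derivation found in those sources: read \eqref{LR1} as the Schur identity $s_\sigma s_\mu = \sum_\lambda c^\lambda_{\sigma\mu} s_\lambda$; deduce (1) from commutativity of tensor product together with a degree count under the scalar torus (equivalently, the grading of $\Lambda$); deduce (2) by applying the ring involution $\omega$ with $\omega(s_\lambda)=s_{\lambda'}$ and reindexing; deduce (3) from $s_{(n)}=h_n$ and the Pieri rule; and deduce (4) from (2) and (3) via $(1^n)'=(n)$. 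All of these steps are sound. The one nonformal ingredient, as you say, is the Pieri rule itself, which you would need to establish separately (by Weyl character formula, Jacobi--Trudi, or an RSK-type insertion argument) if the note were to be self-contained; as a deduction from Pieri, your reasoning is complete. In short, you have supplied the proof that the paper delegates to the literature.
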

We refer the reader to \cite[Appendix A]{CW1} or \cite{Sa} for the details.

\subsection{The Lie superalgebra $\gl(m|n)$}

The general linear Lie superalgebra
$\mathfrak{g}=\mathfrak{gl}(m|n)$ is the vector 
superspace of $(m+n)$ by $(m+n)$ complex matrices,
with the block-diagonal even subspace $\mathfrak{g}_{\bar{0}}=\gl(m)\oplus\gl(n)$.

The standard Cartan subalgebra $\mathfrak{h}$ 
is the set of diagonal matrices 
in $\mathfrak{gl}(m|n)$. Let $h_i:=E_{ii}\in\mathfrak{h}$ be
the elementary matrices with a $1$ in the $(i,i)$ entry and $0$ in all
other entries, for $1\leq i\leq m+n$. 
For $1 \le i \le m$, $1 \le j \le n$, let $\del_i, \ep_j$ be the elements in $\mathfrak{h}^*$ defined by

$$\del_i(h_k) = \Delta_{i,k}$$
$$\ep_j(h_l) = \Delta_{j+m,l}$$
where $\Delta_{x,y} = 1$ if $x=y,$ and $0$ otherwise.
The set $\set{\del_i, \ep_j | 1\leq i\leq m, 1\leq j\leq n}$ forms a basis of $\mathfrak{h}^*$. 

The root system $\Phi$ of $\mathfrak{g}$ is the set 
\[
\Phi=\set{\del_k - \del_l, \ep_i - \ep_j, \del_k - \ep_i| 1\leq k,\l \leq m, 
1\leq i,j\leq n }.
\]
The Weyl group of $\mathfrak{gl}(m|n)$ is isomorphic to $S_m\times S_n$, 
where $S_m$ permutes the $\del_i$'s and $S_n$ permutes the $\ep_j$'s.
Note that not all simple systems are $W$-conjugate.
In particular, there exist different choices
of positive systems (and hence different choices of Borel subalgebras)
that are not $W$-conjugate; see \cite[Section~1.3]{CW1}.

\subsection{Borel Subalgebras}
By an {\em $\epsilon$-$\delta$ sequence} for $\gl(m|n)$, we mean 
a sequence of $m$ (indistinguishable) $\delta$'s and $n$ (indistinguishable)
$\epsilon$'s. For a given $\ep$-$\delta$ sequence $\sund$, 
we can assign the indices $1,\ldots, m$
to the $\delta$ and the indices $1,\ldots, n$ to the $\epsilon$ 
by the increasing order in which they appear. Then $\Pi(\sund)$, the
set of sequential differences of the indexed letters, forms a set
of simple roots for $\Phi$. 
For example, $\Pi(\epsilon\delta\delta\epsilon)
=\set{\epsilon_1-\delta_1,\delta_1-\delta_2,\delta_2-\epsilon_2}$. 

It is easy to check that
any simple system is $W$-conjugate to $\Pi(\sund)$ for some $\sund$.
Moreover, if $\sund$ and $\sund'$ are distinct $\ep$-$\delta$ sequences, then $\Pi(\sund)$ 
and $\Pi(\sund')$ are not conjugate, so these sequences index
the simple systems of $\Phi$ up to conjugacy.

Let $\Phi^+(\sund)$ be the set of positive roots relative to $\Pi(\sund)$ for
some sequence $\sund$. Then we define the Borel subalgebra corresponding to $\sund$
to be \[\borel(\sund)=\h\oplus\bigoplus_{\alpha\in \Phi^+(\sund)} \gl(m|n)_\alpha.\]
Note that the even subalgebra $\borel(\sund)_{\bar{0}}$ is independent of $\sund$;
indeed, $\borel(\sund)_{\bar{0}}$ is exactly
the standard Borel subalgebra of $\gl(m|n)_{\bar{0}}=\gl(m)\oplus\gl(n)$.

Any Borel subalgebra is conjugate by inner automorphisms to $\borel(\sund)$ for
some $\sund$, and $\borel(\sund)$ is not conjugate to $\borel(\sund')$ 
if $\sund\neq \sund'$; therefore, for the remainder of this note a Borel
subalgebra will mean $\borel(\sund)$ for some $\ep$-$\delta$ sequence $\sund$,
and we will abuse notation and identify the subalgebra with its corresponding sequence.
For example, the {\em standard Borel subalgebra} of upper triangular matrices in 
$\mathfrak{gl}(m|n)$ is represented by the sequence
\[\stborel=\stackrel{m}{\overbrace{\delta\ldots\delta}}
\,\stackrel{n}{\overbrace{\epsilon\ldots\epsilon}};.\]

\subsection{Polynomial weights and hook partitions}

Let
$\omega=\sum_{i=1}^m\mu_i\delta_i+\sum_{j=1}^n\nu_j\ep_j\in\mathfrak{h}^*$.
We call $\omega$ a polynomial weight if $\mu_i$ and $\nu_j$ are non-negative
integers for all $i,j$.

Fix a Borel subalgebra $\borel$. Then
each $(m|n)$-hook partition $\lambda$ corresponds to a polynomial weight as follows.
For $1 \le i \le m$ and $1 \le j \le n$, let $d_i$ (resp. $e_j$) be the number
of $\epsilon$'s (resp. $\delta$'s) appearing before the $i^{\rm th}$ $\delta$ (resp.
$j^{\rm th}$ $\epsilon$) in $\borel$.
For example, if $\borel=\del \del \ep \del \ep \del$, then
$$d_1 = d_2 = 0, d_3 = 1, d_4 = 2, e_1 =2, e_2 = 3.$$

For $1\leq i \leq m$ and $1\leq j\leq n$, let
$p_i = \max \{ \lambda_i - d_i, 0\}$ and 
$q_j = \max \{ \lambda '_j - e_j, 0\}$.
The pair $(p_i,q_j)$  is called
the $\mathfrak{b}$-Frobenius coordinates; cf. \cite{CW1}. 
Then set
\[
\lambda^{\mathfrak{b}} = \sum_{i=1}^m p_i \del_i + \sum_{j=1}^n q_j \ep_j.
\] 

When $\borel=\stborel$, there is a simpler description.
Define the partitions $\mu=(\lambda_1,\ldots, \lambda_m)$ and 
$\sigma=(\lambda_{m+1}, \lambda_{m+2},\ldots)'$. Then we define $\lambda^{\natural}=\lambda^{\stborel}=\sum_{i=1}^m \mu_i\delta_i+\sum_{j=1}^{n}\sigma_j\epsilon_j$.

\subsection{Polynomial modules of $\mathfrak{gl}(m|n)$}

An $\h$-semisimple module $M$ is a 
polynomial module if the weights of $M$ are polynomial.
In this note, by polynomial module we always mean a finite-dimensional
polynomial module.

Given a weight $\omega$ and a Borel $\borel$, let
$L(\borel, \omega)$ be the simple module of highest weight $\omega$
with respect to $\borel$. When $\borel=\stborel$, we use the shorthand
$L(\omega)=L(\stborel, \omega)$. Then we have the following result
(cf. \cite{K,CK,CLW,CW1}).

\begin{prop}\label{prop:classification}
Fix a Borel subalgebra $\borel$.
\begin{enumerate}
\item[(1)] $\set{L_{m|n}(\borel,\lambda^\borel):\lambda\in P_{m|n}}$ is
a complete list of pairwise non-isomorphic (finite-dimensional) simple
polynomial $\gl(m|n)$-modules.
\item[(2)]  $L_{m|n}(\borel, \lambda^{\borel})\cong L_{m|n}(\lambda^\natural)$.
\item[(3)]  The category of polynomial modules of $\mathfrak{gl}(m|n)$ is a semisimple tensor category.
\end{enumerate}
\end{prop}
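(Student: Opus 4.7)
The plan is to establish (3) and the $\borel = \stborel$ instance of (1) simultaneously via Schur-Sergeev duality, then bootstrap to arbitrary $\borel$ using odd reflections to obtain (2) and the general instance of (1).

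First, let $V = \C^{m|n}$ denote the natural representation. Schur-Sergeev duality provides a multiplicity-free decomposition of $V^{\otimes d}$ as a $\gl(m|n) \times \C S_d$-bimodule,
\[
V^{\otimes d} \cong \bigoplus_{\la \vdash d,\, \la \in P_{m|n}} L_{m|n}(\la^\natural) \otimes S^\la,
\]
where $S^\la$ is the Specht module. This realizes every $L_{m|n}(\la^\natural)$ (for $\la \in P_{m|n}$) as a summand of some tensor power of $V$, and distinct $\la$ yield distinct highest weights $\la^\natural$ and hence pairwise non-isomorphic simples. Conversely, any finite-dimensional polynomial module embeds in a finite sum of such tensor powers, because its weights lie in $\bigoplus_i \Z_{\geq 0}\del_i \oplus \bigoplus_j \Z_{\geq 0}\ep_j$ with bounded total degree. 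Combined with semisimplicity of $V^{\otimes d}$ over $\gl(m|n)$ (a consequence of the bimodule decomposition together with semisimplicity of $\C S_d$), this proves that the category of polynomial modules is semisimple; since the tensor product of polynomial modules is polynomial, (3) follows, and the parameterization in (1) for $\borel = \stborel$ is immediate.

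Next, to pass to arbitrary $\borel = \borel(\sund)$, I would invoke the theory of odd reflections. Any two $\ep$-$\del$ sequences are connected by a sequence of adjacent transpositions of opposite-parity letters, and each such transposition corresponds to an odd reflection $r_\alpha$ at an isotropic simple root $\alpha$. For a simple module $L$ with $\borel$-highest weight $\omega$, the $r_\alpha(\borel)$-highest weight is $\omega - \alpha$ when $(\omega, \alpha) \neq 0$ and $\omega$ otherwise. Starting from $\la^\natural$ and iterating this rule along any path from $\stborel$ to $\sund$, one verifies by induction on the number of transpositions that the resulting weight coincides with $\la^\borel$ as computed by the $\borel$-Frobenius coordinates. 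This is (2), and then (1) for arbitrary $\borel$ follows from (1) for $\stborel$ combined with (2): the map $\la \mapsto L_{m|n}(\borel, \la^\borel) = L_{m|n}(\la^\natural)$ is a bijection on isomorphism classes of simple polynomial modules.

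The main obstacle is the combinatorial verification inside part (2): checking that the iterated effect of odd reflections on $\la^\natural$ is exactly captured by the $\borel$-Frobenius coordinates. The formulas $p_i = \max\{\la_i - d_i, 0\}$ and $q_j = \max\{\la'_j - e_j, 0\}$ should arise naturally because each odd reflection across an adjacent $\del\ep$ (or $\ep\del$) pair either subtracts an isotropic root, when the relevant coordinate is still positive, or acts as the identity, once the $\max$ has clipped to zero; the statistics $d_i, e_j$ record precisely how many opposite-parity letters have been passed in $\sund$.
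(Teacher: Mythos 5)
The paper does not prove this proposition at all; it is stated as a known result with the citation ``(cf.~\cite{K,CK,CLW,CW1})'' and used as a black box. So there is no proof here to compare against. That said, your outline is essentially the route that those references take: Schur--Sergeev duality for the classification with respect to $\stborel$ and for semisimplicity, and odd reflections to relate highest weights across different Borels. The Frobenius-coordinate description of $\lambda^\borel$ and its equivalence with iterated odd reflections is worked out carefully in~\cite[Chapter~2]{CW1}.

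Two points in your sketch deserve flagging. First, the step ``any finite-dimensional polynomial module embeds in a finite sum of tensor powers, because its weights lie in $\bigoplus_i \Z_{\geq 0}\delta_i \oplus \bigoplus_j \Z_{\geq 0}\epsilon_j$ with bounded total degree'' is not a proof: knowing the weights are polynomial of bounded degree does not by itself produce an embedding. The standard argument is instead to observe that a polynomial module of degree $d$ is precisely a module over the Schur superalgebra $S(m|n,d) := \mathrm{im}\bigl(U(\gl(m|n)) \to \End(V^{\otimes d})\bigr)$, which is semisimple by the double commutant theorem (it is the commutant of the semisimple algebra $\C S_d$ acting on $V^{\otimes d}$); semisimplicity of the polynomial category and the identification of its simples then follow from semisimplicity of $S(m|n,d)$ rather than from an a priori embedding. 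Second, you correctly identify that matching the iterated odd-reflection formula with the $\borel$-Frobenius coordinates $p_i = \max\{\lambda_i - d_i, 0\}$, $q_j = \max\{\lambda'_j - e_j, 0\}$ is the nontrivial combinatorial content of part (2); your heuristic for why the $\max$ appears is the right intuition, but as written this is an assertion rather than an induction. Both gaps are genuine but are exactly what the cited sources fill in, so for a paper that treats the proposition as background, the outline is adequate; as a self-contained proof it is not yet complete.
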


\section{Branching rule for $\mathfrak{gl}(m|n)$}

\subsection{Branching Rule}
Fix a Borel subalgebra $\borel$.
Let $\hat{\mathfrak{b}}$ be the $\ep$-$\delta$ sequence 
obtained from $\mathfrak{b}$ by deleting the last entry 
in the sequence $\borel$, and $\hat{m}$ (respectively, $\hat{n}$) 
be the number of $\delta$'s (respectively, $\epsilon$'s) in $\hat{\borel}$.
Also set $\hat{h}=h_m\in \mathfrak h$ if $\hat{n}=n$ and $\hat{h}=h_{m+n}$
if $\hat{m}=m$.
Then we have a natural embedding 
\[\gl(\hat{m}|\hat{n})\cong \hat{\mathfrak h}\oplus \bigoplus_{\alpha\in \Phi\cap \hat{\mathfrak h}^*} \gl(m|n)_\alpha\] where $\hat{\mathfrak{h}}\oplus \C \hat{h}=\mathfrak h$.
With respect to this embedding, we have the following branching 
rule for an arbitrary Borel subalgebra.

\begin{thm}[Branching Rule for $\mathfrak{gl}(m|n)$ with respect to a general Borel] \label{thm:branching}
Let $\lambda$ be an $(m|n)$-hook partition and $\borel$ be a choice
of Borel subalgebra for $\gl(m|n)$.
In the above notation,
\[L_{m|n}(\borel,\lambda^{\borel})\downarrow_{\gl(\hat m, \hat n)}\cong\bigoplus_{\sigma}
L_{\hat{m}|\hat{n}}(\hat{\mathfrak{b}}, \sigma^{\hat{\mathfrak{b}}})\]
where the sum is taken over all $(\hat m|\hat n)$-hook partitions $\sigma$
such that 
\begin{enumerate}
\item[(1)] $\sigma'$ interlaces $\lambda'$ if $m=\hat m$,
\item[(2)] $\sigma$ interlaces $\lambda$ if $n=\hat n$.
\end{enumerate}
\end{thm}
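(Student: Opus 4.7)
The plan is to derive the statement from $(\gl(m|n),\gl(k))$-Howe duality \cite{CW2,Se1} for $k$ sufficiently large. Recall that this duality furnishes a $(\gl(m|n),\gl(k))$-bimodule decomposition of the super symmetric algebra
$$S^\bullet(\C^{m|n}\otimes\C^k)\;\cong\;\bigoplus_{\lambda\in P_{m|n},\,l(\lambda)\le k} L_{m|n}(\lambda^\natural)\otimes L_k(\lambda).$$
I will first establish the theorem in the case $\borel=\stborel$ (so that $\lambda^\borel=\lambda^\natural$), and then transfer to a general Borel using Proposition~\ref{prop:classification}(2), which identifies $L_{m|n}(\borel,\lambda^{\borel})$ with $L_{m|n}(\lambda^\natural)$ and likewise for $\hat\borel$.

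The key observation is that the super vector space $\C^{m|n}$ admits a $\gl(\hat m|\hat n)$-stable splitting $\C^{m|n}\cong \C^{\hat m|\hat n}\oplus \C\hat h$, in which the one-dimensional complement $\C\hat h$ is purely even if the deleted letter is $\delta$ (so $\hat n=n$) and purely odd if it is $\epsilon$ (so $\hat m=m$). Tensoring with $\C^k$ and factoring the super symmetric algebra over a direct sum yields the $(\gl(\hat m|\hat n)\times\gl(k))$-equivariant isomorphism
$$S^\bullet(\C^{m|n}\otimes\C^k)\;\cong\;S^\bullet(\C^{\hat m|\hat n}\otimes\C^k)\otimes X,$$
with $X=S^\bullet(\C^k)=\bigoplus_{r\ge 0}L_k((r))$ in the $\delta$-case and $X=\Lambda^\bullet(\C^k)=\bigoplus_{r\ge 0}L_k((1^r))$ in the $\epsilon$-case.

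Applying Howe duality to the first factor and combining with the Pieri rules in Proposition~\ref{LR2}(3),(4), the right-hand side rewrites as
$$\bigoplus_{\sigma\in P_{\hat m|\hat n}} L_{\hat m|\hat n}(\sigma^\natural)\otimes \bigoplus_{\lambda}L_k(\lambda),$$
where the inner sum is over partitions $\lambda$ such that $\lambda/\sigma$ is a horizontal strip in the $\delta$-case, or a vertical strip in the $\epsilon$-case. Comparing $L_k(\lambda)$-isotypic components against the left-hand side of Howe duality and invoking semisimplicity (Proposition~\ref{prop:classification}(3)) to read off $\gl(\hat m|\hat n)$-multiplicities then yields the branching law for $\stborel$, which transfers to arbitrary $\borel$ as explained above.

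The main substantive input is the super Howe duality itself; once it is in hand, the rest of the argument is essentially a diagram chase. The one delicate point I anticipate is the super-sign bookkeeping when splitting off the one-dimensional odd summand: the sign rule is precisely what forces $S^\bullet$ on the odd line to become an exterior algebra on its underlying even space, which in turn replaces the row-Pieri factor by a column-Pieri factor and produces the vertical-strip condition of case (1) rather than the horizontal-strip condition of case (2). Verifying that everything else — in particular the $(\hat m|\hat n)$-hook condition on $\sigma$ — is controlled automatically by the Howe decomposition on the right should then be a matter of routine inspection.
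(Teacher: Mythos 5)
Your proposal is correct and follows essentially the same strategy as the paper: both split $S^\bullet(\C^{m|n}\otimes\C^k)$ as $S^\bullet(\C^{\hat m|\hat n}\otimes\C^k)\otimes X$ (the paper phrases the $X$-factor via a second application of Howe duality for $\gl(1|0)$ or $\gl(0|1)$, which amounts to your identification $X\cong S^\bullet(\C^k)$ or $\Lambda^\bullet(\C^k)$), apply Howe duality and the Pieri rule, and compare $L_k(\lambda)$-isotypic components. The only cosmetic difference is that the paper keeps $\borel$-dependent highest-weight labels throughout and routes the Pieri step through general Littlewood--Richardson coefficients before specializing, whereas you work with $\lambda^\natural$ and invoke Pieri directly; by Proposition~\ref{prop:classification}(2) these are the same thing.
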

\begin{proof}

First recall the statement of super Howe duality:
as $\mathfrak{gl}(m|n) \times \mathfrak{gl}(k)$-modules,
\begin{equation} \label{first decompose}
S(\C^{m|n} \otimes \C^k) \cong \bigoplus_{\substack{l( \la ) \le k \\ \la_{m+1} \le n}} L_{m|n}(\borel, \la^\borel ) \otimes  L_k(\la).
\end{equation}

Let $\p=\mathfrak{gl}(\hat m|\hat n) \times \mathfrak{gl}(m-\hat{m}|n-\hat{n})$.
Restricting \eqref{first decompose} to a
$\p\times \gl(k)$-module yields the decomposition
\begin{equation} \label{first branching}
S(\C^{m|n} \otimes \C^k) \downarrow_{\p\times \gl(k)} \cong \bigoplus_{\substack{l( \la ) \le k \\ \la_{m+1} \le n}} L_{m|n}(\borel, \la^{\borel} ) \otimes  L_k(\la).
\end{equation}

On the other hand, evidently $S(\C^{m|n} \otimes \C^k)\downarrow_{\p\times \gl(k)}\cong S(\C^{\hat{m}|\hat{n}} \otimes \C^k) \otimes S(\C^{m-\hat m|n-\hat n} \otimes \C^k)$.
By applying Super Howe duality again to each tensor factor, we see that

\begin{equation*} \label{second decompose}
S(\C^{m|n} \otimes \C^k)\downarrow_{\p\times \gl(k)} \cong \bigoplus_{\sigma, \mu} L_{\hat m|\hat n}(\hat\borel, \sigma^{\hat\borel}) \otimes L_k(\sigma) \otimes L_{m-\hat m|n-\hat n}(\theta, \mu^{\theta}) \otimes L_k(\mu),
\end{equation*}
where the sum is taken over all $\sigma$ and $\mu$ such that 
$\sigma$ is an $(\hat m|\hat n)$-hook partition with $l(\sigma)\leq k$ 
and $\mu$ is a $(m-\hat m|n-\hat n)$-hook partition with $l(\mu)\leq k$,
and the notation $\theta$=$\del$(resp. $\ep$) if $\hat n=n$(resp. $\hat m=m$).
In particular, since $\gl(m-\hat m|n-\hat n)\cong \C$, $L_{m-\hat m|n-\hat n}(\theta, \mu^{\theta})$
is $1$-dimensional and thus
\begin{equation} \label{second decompose2}
S(\C^{m|n} \otimes \C^k)\downarrow_{\gl(\hat m|\hat n)\times \gl(k)}
= \bigoplus_{\sigma, \mu} 
L_{\hat m|\hat n}(\hat\borel, \sigma^{\hat\borel})
\otimes \big(L_k(\sigma)  \otimes L_k(\mu)\big).
\end{equation}
Then using \eqref{LR1},
\begin{eqnarray} \label{second branching2}
\nonumber S(\C^{m|n} \otimes \C^k)\downarrow_{\gl(\hat m|\hat n)\times \gl(k)}  &= \bigoplus_{\sigma, \mu}L_{\hat m|\hat n}(\hat\borel, \sigma^{\hat\borel}) \otimes \bigoplus_\lambda L_k(\lambda)^{\oplus c^\lambda_{\sigma\mu}}\\
 & = \bigoplus_{\sigma,\mu,\lambda}L_{\hat m|\hat n}(\hat\borel, \sigma^{\hat\borel}) \otimes L_k(\lambda)^{\oplus c^\lambda_{\sigma\mu}}
\end{eqnarray}
Comparing \eqref{first branching} and \eqref{second branching2}, we obtain
\begin{equation}\label{branchingequality}
L_{m|n}(\borel, \la^\borel )\downarrow_{\mathfrak{gl}(\hat m|\hat n)} = 
\bigoplus_{\sigma,\mu}  L_{\hat m|\hat n}(\hat\borel, \sigma^{\hat\borel})^{\oplus c^\lambda_{\sigma\mu} }.
\end{equation}

Since $\{m-\hat{m},n-\hat n\}=\set{0,1}$, $\mu$ is either a row or column partition.
The result follows by Proposition \ref{LR2}.

\end{proof}

\begin{rmk}
The case $\borel=\stborel$ had been proved in \cite{BR}
using the commuting actions of $\gl(m|n)$ and $S_d$ 
on $(\C^{(m|n)})^{\otimes d}$.
\end{rmk}

\subsection{The Gelfand-Tsetlin basis and $\borel$-semistandard tableaux}

By iterating Theorem \ref{thm:branching}, one obtains a linear
basis of $L_{m|n}(\borel, \lambda^\borel)$. These basis vectors are in
bijection with the sequences of triples $(m_k,n_k,\lambda^k)$
where $m_k,n_k$ are non-negative integers and $\lambda^k$ are partitions
satisfying the following conditions.
\begin{enumerate}
\item Either $m_k=m_{k+1}$ and $n_k=n_{k+1}+1$ 
		or $m_k=m_{k+1}+1$ and $n_k=n_{k+1}$.
\item $\lambda^k$ is an $(m_k|n_k)$-hook partition.
\item $\lambda^{k}/\lambda^{k+1}$ is a vertical strip if $m_k=m_{k+1}$
or a horizontal strip otherwise (or the empty diagram).
\end{enumerate}

These sequences may be realized in terms of certain tableaux of $\lambda$.
Let $A$ denote the alphabet set $\set{1,\ldots, m, \bar{1}, \ldots, \bar{n}}$
and $\mathfrak{b}$ be the following $\ep$-$\delta$ sequence:
\[
\mathfrak{b}=\overbrace{\delta\delta\ldots\delta}^{\mu_1}
\overbrace{\ep\ep\ldots\ep}^{\nu_1}\ldots\ldots
\overbrace{\delta\ldots\delta}^{\mu_t}\overbrace{\ep\ldots\ep}^{\nu_t} \,,
\]
where $\mu_1$ and $\nu_t$ are non-negative and all other $\mu_i$, $\nu_j$ are positive integers. For convenience, set $\nu_0=0$.  Define the total order $<_\borel$ on $A$ by setting 
\[i<_\borel j\text{ if }1\leq i<j\leq m,\qquad \bar k<_\borel \bar l\text{ if }1\leq k<l\leq n,\]
\[i<_\borel \bar{k} \text{ if }i\leq\sum_{g=1}^r\mu_g\text{ and } k>\sum_{g=0}^{r-1}\nu_{g}\text{ for some }1\leq r\leq t,\]
\[\bar{k}<_\borel i \text{ if }i>\sum_{g=1}^{r}\mu_g\text{ and } k\leq \sum_{g=0}^{r}\nu_{g}\text{ for some }1\leq r\leq t.\]
A tableau in $A$ will be called 
{\em $\mathfrak{b}$-semistandard} if the following three conditions are satisfied:

\begin{enumerate}
\item The entries are weakly increasing along each row and column
with respect to $<_\borel$,

\item The entries from $\set{1,2, \ldots , m}$ are strictly increasing along each column,
\item The entries from $\set{\bar{1},\bar{2},\ldots,\bar{n}}$ are strictly increasing along each row.
\end{enumerate}
The combinatorics of such tableaux have been studied in some detail,
and in particular it is known that 
there is a content-preserving bijection between $\borel$-semistandard
tableaux and $\stborel$-semistandard tableaux; cf. \cite{Kw}.

\begin{prop}
The module $L_{m|n}(\borel, \lambda^{\borel})$ has a basis
indexed by $\borel$-semistandard tableaux of shape $\lambda$
which is obtained by iterated application of the branching rule.
The entries determine which simple summand the basis vector
belongs to at each application of the branching rule.
\end{prop}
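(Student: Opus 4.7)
The plan is to apply Theorem~\ref{thm:branching} repeatedly---$m+n$ times in all---so that after deleting the last entry of $\borel$ at each stage one eventually restricts down to $\gl(0|0)$. This yields a decomposition
\[ L_{m|n}(\borel, \lambda^\borel) = \bigoplus_{C} V_C, \]
where $C$ runs over chains $\lambda = \lambda^{(0)} \supset \lambda^{(1)} \supset \cdots \supset \lambda^{(m+n)} = \emptyset$ of hook partitions for the successive truncations of $\borel$, with $\lambda^{(k-1)}/\lambda^{(k)}$ a horizontal strip whenever the entry deleted at step $k$ is a $\delta$ and a vertical strip whenever it is an $\epsilon$. Each summand $V_C$ is one-dimensional, since the final restriction is to a trivial Lie superalgebra.

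Next, I would realize each chain as a filling of $\lambda$. Label the entries of $\borel$ from left to right so that the $i$-th $\delta$ carries the letter $i$ and the $\ell$-th $\epsilon$ carries the letter $\bar\ell$. Given a chain $C$ and a box $x \in \lambda$, let $\Phi(C)(x)$ be the letter on the entry of $\borel$ deleted at the unique step $k = k(x)$ with $x \in \lambda^{(k-1)} \setminus \lambda^{(k)}$. The inverse candidate sends a filling $T$ to the chain whose $\lambda^{(k)}$ is the subdiagram of $\lambda$ consisting of those boxes whose $T$-values come from the first $m+n-k$ entries of $\borel$; recovering $\lambda^{(k)}$ from $T$ is exactly what the proposition means by ``the entries determine which simple summand the basis vector belongs to at each application of the branching rule''.

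Third, I would show that $\Phi$ is a bijection onto $\borel$-semistandard tableaux of shape $\lambda$. The strictness conditions fall out from the strip conditions: all boxes removed at one $\delta$-step share the same unbarred label and, forming a horizontal strip, lie in distinct columns; dually at an $\epsilon$-step. For weak monotonicity, if $x'$ lies immediately to the right of, or immediately below, $x$, then partition-containment of the chain forces $k(x') \le k(x)$, so the $\borel$-entry erased at step $k(x')$ sits no earlier in $\borel$ than the one erased at step $k(x)$; unwinding the definitions then gives $\Phi(C)(x) \le_\borel \Phi(C)(x')$. The inverse is verified to produce a legitimate chain of hook partitions with the correct strip shapes by reversing this analysis.

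The main technical point---really the only place where something must be checked rather than observed---is matching the $<_\borel$-order on $A$ with the left-to-right reading order of entries in $\borel$. Concretely, one must verify that $i <_\borel \bar\ell$ holds iff the $i$-th $\delta$ precedes the $\ell$-th $\epsilon$ in $\borel$, which is a direct but slightly intricate check of the partial-sum definition of $<_\borel$ against the block structure $\borel = \delta^{\mu_1} \epsilon^{\nu_1} \cdots \delta^{\mu_t} \epsilon^{\nu_t}$. Once this dictionary is in place, the weak-monotonicity clauses of $\borel$-semistandard tableaux correspond precisely to the nesting in the chain, and the strictness clauses correspond to the strip conditions, completing the bijection and therefore the proposition.
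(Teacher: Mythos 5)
Your proof takes essentially the same approach as the paper: both define the bijection between iterated-branching chains of partitions and $\borel$-semistandard tableaux by labeling the boxes of each skew strip $\lambda^{(k-1)}/\lambda^{(k)}$ with the letter deleted at step $k$, which (because $<_\borel$ agrees with the left-to-right reading order of $\borel$) is the $k$-th largest element of $A$. You spell out the verification of the weak-increase and strictness conditions in somewhat more detail than the paper, which simply asserts the correspondence, but this is an elaboration rather than a different argument.
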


\begin{proof}
Order $A$ from largest to smallest according to $\borel$ to obtain
$A=\set{a_1>_\borel \ldots >_\borel a_{n+m}}$.
Then given a sequence $(m_k,n_k,\lambda^k)$ associated to a linear basis element
we can define a unique $\borel$-semistandard tableau: give each box in 
$\lambda^{k}/\lambda^{k+1}$ the entry $a_k$. Clearly the resulting
tableau is uniquely determined by the sequence of partitions. On the other hand,
given a $\borel$-semistandard tableau it is evident that
stripping away the boxes with entry $a_1$, then those with $a_2$,
etc. defines a sequence of partitions corresponding to a basis element.
\end{proof}

\begin{example}
Consider $\mathfrak{gl} (3|2)$ and $\mathfrak{b}=\delta\ep\ep\delta\delta$. We choose the following partitions
$\lambda = \lambda^{1} = (4,3,3,2,2,1)$, 
$\lambda^{2} = (3,3,2,2,2)$,
$\lambda^{3} = (3,2,2,2)$, 
$\lambda^{4} = (2,1,1,1)$,
$\lambda^{5} = (1)$. 

In this case, the ordered alphabet is
\[1<_{\borel}\bar{1}<_{\borel}\bar{2}<_{\borel}2<_{\borel}3.\]
Following the algorithm above, we find the corresponding 
$\mathfrak{b}$-semistandard tableau to be
$$\young(1\barone\bartwo3,\barone\bartwo2,\barone\bartwo3,\barone\bartwo,22,3)$$
\end{example}

We note that if $\mu$ is the content of a $\borel$-semistandard tableau $T$ and we write
\[\mu=(1^{\mu_1},2^{\mu_2},\ldots, m^{\mu_m},\bar{1}^{\mu_{\bar{1}}},\ldots, \bar{n}^{\mu_{\bar{n}}})\] 
then the basis vector corresponding to $T$ 
has weight 
\[\mu=\sum_{i=1}^m \mu_i\delta_i + \sum_{j=1}^n \mu_{\bar{j}}\epsilon_j.\]

\subsection{An analogue of the Kostka numbers}

The branching rule implies that the number of 
$\mathfrak{b}$-semistandard tableaux of shape $\la$ is equal to the dimension 
of $\mathfrak{gl}(m|n)$-module $L_{m|n}(\mathfrak{b},\la^\mathfrak{b})$, 
which is the same module as $L_{m|n}(\lambda^{\natural})$
by Proposition \ref{prop:classification}. Comparing the Gelfand-Tsetlin bases,
we obtain the following result, which is a special case of \cite[Proposition 2.11]{Kw}.

\begin{cor}\label{lastcor}
Fix an $(m|n)$-hook partition $\la$ and a content $\mu$. 
The number $K_{\lambda,\mu}^\borel$ of 
$\borel$-semistandard tableaux of shape $\lambda$
and content $\mu$ is independent of the choice of $\mathfrak{b}$.
In particular, there is a well-defined number $K_{\lambda,\mu}^{(m|n)}=K_{\lambda,\mu}^{\borel}$ which is an analogue of the Kostka number $K_{\lambda,\mu}$.
\end{cor}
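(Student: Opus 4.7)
The plan is to reinterpret $K_{\lambda,\mu}^\borel$ as the dimension of a weight space in a module that, up to isomorphism, does not depend on $\borel$. The ingredients are all already in hand: the iterated branching rule produces the $\borel$-semistandard basis, the preceding paragraph identifies the weight of each basis vector in terms of the content of the tableau, and Proposition \ref{prop:classification}(2) tells us that $L_{m|n}(\borel,\lambda^\borel)\cong L_{m|n}(\lambda^\natural)$ as $\gl(m|n)$-modules regardless of the choice of $\borel$.

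Concretely, I would proceed as follows. First, by the preceding proposition, the module $L_{m|n}(\borel,\lambda^\borel)$ admits a basis $\{v_T\}$ indexed by $\borel$-semistandard tableaux $T$ of shape $\lambda$. Second, as observed in the paragraph just before the corollary, the vector $v_T$ is an $\h$-weight vector whose weight is exactly the content of $T$, viewed as an element of $\h^*$ via the identification $\mu\mapsto \sum_{i=1}^m \mu_i\delta_i+\sum_{j=1}^n \mu_{\bar j}\epsilon_j$. Consequently, for any fixed content $\mu$, the cardinality $K_{\lambda,\mu}^\borel$ equals the dimension of the $\mu$-weight space of $L_{m|n}(\borel,\lambda^\borel)$.

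Third, I would invoke Proposition \ref{prop:classification}(2) to conclude that $L_{m|n}(\borel,\lambda^\borel)$ and $L_{m|n}(\stborel,\lambda^\natural)$ are isomorphic as $\gl(m|n)$-modules. In particular, they are isomorphic as $\h$-modules, so their weight multiplicities agree:
\[
K_{\lambda,\mu}^\borel=\dim L_{m|n}(\borel,\lambda^\borel)_\mu=\dim L_{m|n}(\lambda^\natural)_\mu.
\]
The right-hand side does not involve $\borel$ at all, so $K_{\lambda,\mu}^\borel$ is independent of $\borel$, and we may define $K_{\lambda,\mu}^{(m|n)}$ to be this common value.

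There is no serious obstacle in this argument; the only subtle point is to verify that the weight assignment $v_T\mapsto \text{content}(T)$ is really the statement made in the paragraph preceding the corollary, so that counting tableaux by content genuinely coincides with computing weight multiplicities. This is immediate from tracking the weight contributions of the skew strips $\lambda^{k}/\lambda^{k+1}$ through each step of the branching rule, since the weight of the highest-weight vector of $L_{\hat m|\hat n}(\hat\borel,\sigma^{\hat\borel})$ sitting inside $L_{m|n}(\borel,\lambda^\borel)$ differs from $\lambda^\borel$ precisely by the contribution recorded by the added entries $a_k$.
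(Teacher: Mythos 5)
Your proposal is correct and follows essentially the same route as the paper: identify $K_{\lambda,\mu}^\borel$ as the $\mu$-weight-space dimension of $L_{m|n}(\borel,\lambda^\borel)$ via the Gelfand--Tsetlin basis and the observation (stated just before the corollary) that the basis vector attached to a tableau of content $\mu$ has weight $\mu$, then invoke Proposition~\ref{prop:classification}(2) to conclude this module is independent of $\borel$. You have simply spelled out what the paper compresses into ``Comparing the Gelfand--Tsetlin bases.''
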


\begin{example}
Consider $\mathfrak{gl}(2|1)$ and $\lambda = (3,2,1)$

$$\young(\,\,\,,\,\,,\,)$$

Consider two different $\ep$-$\del$ sequences $\mathfrak{b}_1 = \mathfrak{b}^{st} = \del \del \ep$ and $\mathfrak{b}_2 = \del \ep \del$. One may check that all possible $\mathfrak{b}_1$-semistandard tableaux of shape $\lambda$ are
$$\young(111,22,\barone) \hspace{1 cm} \young(111,2\barone,\barone) \hspace{1 cm} \young(112,22,\barone) \hspace{1 cm} \young(11\barone,2\barone,\barone)$$
$$\young(112,2\barone,\barone) \hspace{1 cm} \young(11\barone,22,\barone) \hspace{1 cm} \young(122,2\barone,\barone) \hspace{1 cm} \young(12\barone,2\barone,\barone)$$
and all possible $\mathfrak{b}_2$-semistandard tableaux are
$$\young(111,\barone 2,2) \hspace{1 cm} \young(111,\barone 2,\barone) \hspace{1 cm} \young(112,\barone 2,2) \hspace{1 cm} \young(11\barone,\barone 2,\barone)$$
$$\young(11\barone,\barone 2,2) \hspace{1 cm} \young(11\barone,\barone 2,2) \hspace{1 cm} \young(1\barone 2,\barone 2,2) \hspace{1 cm} \young(1\barone 2,\barone 2,\barone)$$
Hence the number of semistandard tableaux are the same.

In both cases, there are two semistandard tableaux with content $(1^2,2^2 |\, \bar{1}^2)$ and there is only one semistandard tableau for other possible contents.

\end{example}

\end{document}